\numberwithin{equation}{section}
\newtheorem{theorem}{Theorem}[section]
\newtheorem{proposition}[theorem]{Proposition}
\newtheorem{corollary}[theorem]{Corollary}
\newtheorem{lemma}[theorem]{Lemma}
\newtheorem{example}[theorem]{Example}
\newcommand{\supp}{{\rm Supp}}
\newcommand{\Leb}{\mathop{\mathrm{Leb}}\nolimits}
\newcommand{\dist}{\mathop{\mathrm{dist}}\nolimits}
\newcommand{\ddc}{dd^c}
\newcommand{\dc}{d^c}
\def\d{\operatorname{d}}
\newcommand{\C}{\mathbb{C}}
\newcommand{\D}{\mathbb{D}}
\newcommand{\N}{\mathbb{N}}
\title{\bf Continuity of functions in complex Sobolev spaces}
\providecommand{\keywords}[1]{\textbf{\textit{Keywords:}} #1}
\providecommand{\subject}[1]{\textbf{\textit{Mathematics Subject Classification 2010:}} #1}
\author{Duc-Viet Vu}
\newcommand{\Addresses}{{
		\bigskip
		\footnotesize
		\textsc{Duc-Viet Vu, University of Cologne, Division of Mathematics, Department of Mathematics and Computer Science, Weyertal 86-90, 50931, K\"oln.}
		\noindent
		\par\nopagebreak
		\noindent
		\textit{E-mail address}: \texttt{dvu@uni-koeln.de}
}}
\date{\today}
\begin{document}

\maketitle

\begin{abstract} We study the continuity regularity of functions in the complex Sobolev spaces. As applications, we obtain Hermitian generalizations of a recent result due Guedj-Guenancia-Zeriahi on the diameters of K\"ahler metrics. 
\end{abstract}
\noindent
\keywords {Monge-Amp\`ere equation}, {diameter}, {complex Sobolev space}, {closed positive current}.
\\

\noindent
\subject{32U15},  {32Q15}.



\section{Introduction}

Let $U$ be a bounded open set in $\C^n$ endowed with the standard Euclidean form $\omega_{\C^n}:= \frac{1}{2}dd^c |z|^2$ (recall $d=\partial + \bar{\partial}$ and $d^c:=\frac{i}{2 \pi} (\bar \partial -\partial)$). Let $W^*(U)$ be the subset of $W^{1,2}(U)$ consisting of $u$ satisfying that  there exists a positive closed current $T$ of bidegree $(1,1)$ and of finite mass (i.e., $\int_U T \wedge \omega_{\C^n}^{n-1} < \infty$) on $U$ such that  
\begin{equation}\label{bound_by_current}
d \varphi \wedge d^c \varphi \leq T.
\end{equation}
The space $W^*(U)$ is called complex Sobolev space.  When $k=1$, then $W^*(U)=W^{1,2}(U)$ since $d \varphi \wedge d^c \varphi$  is already a measure. The space $W^*(U)$ was introduced by Dinh and Sibony in \cite{DS_decay} and developed in \cite{Vigny} in the study of complex dynamics. We refer to \cite{DLW,DLW2,Vigny_expo-decay-birational,Vu_nonkahler_topo_degree} for more applications in dynamics; see also \cite{DinhMarinescuVu,DKC_Holder-Sobolev} for recent applications in Monge-Amp\`ere equations.

The space $W^*(U)$ is actually a Banach space endowed with the norm
$$ \|\varphi\|^2_*= \int_U |\varphi|^2 \omega_{\C^n}^{n} + \inf\big\{\int_U T \wedge \omega_{\C^n}^{n-1} \big\}$$    
where the infimum is taken over all the positive closed current of bidegree $(1,1)$ satisfying \eqref{bound_by_current}; see \cite{Vigny}.  Let $\gamma>0$ be a constant.   A function $f$ is said to be $\log^\gamma\log$-continuous on a bounded set $K \subset \C^n$ if there is a constant $B>0$ such that  for every $x,y \in K$ we have 
$$|f(x)- f(y)| \le \frac{B}{\max\{\log^\gamma |\log |x-y||,1\}}\cdot$$
We let $\|f\|_{\log^\gamma \log(K) }$ to be the sum of $\|f\|_{L^\infty(K)}$ and  the  minimum of all such constants $B$. 

Let $M \ge 1$ and $\gamma_0>0$ be  constants. Let $\mathcal{A}_{M,\gamma_0}= \mathcal{A}_{M,\gamma_0}(U)$ be the subset of $W^*(U)$ consisting of $u \in W^*(U)$ such that there is a psh function $\psi$ which is $\log^{1+\gamma_0} \log$- continuous function with $\|\psi\|_{\log^{1+\gamma_0} \log (U)} \le M$   satisfying $d u \wedge \dc u \le  \ddc \psi$. Let $\gamma_1 \in (0,1]$, we define $\mathcal{A}'_{M,\gamma_1}$ similarly as $\mathcal{A}_{M,\gamma_0}$ but with the H\"older norm $\|\psi\|_{\mathcal{C}^{\gamma_1}(U)}$ instead of $\|\psi\|_{\log^{1+\gamma_0} \log (U)}$. Note that we don't require that the $*$-norm of $u \in \mathcal{A}_{M,\gamma_0}$ (or $\mathcal{A}'_{M,\gamma_1}$) is uniformly bounded.  Here is the main result of this paper.

\begin{theorem} \label{th-bichanLinfinity}  Let $K$ be a compact subset in $U$. Then, for every constant $\gamma \in (0, \gamma_0)$,  there exists a constant $C_{M,K, \gamma}$ such that 
$$|u(x)- u(y)| \le \frac{C_{M,K,\gamma}}{\max\{\log^{\gamma/2} |\log |x-y||,1\}},$$
for every $u \in\mathcal{A}_{M,\gamma_0}$ and $x,y \in K$. Similarly, for every $\gamma'_1 \in (0, \gamma_1/2)$, and 
there exists a constant $C_{M,K,\gamma_1'}$ satisfying
$$|u(x)-u(y)| \le C_{M,K,\gamma_1'}|x-y|^{\gamma'_1}$$
for every $u \in \mathcal{A}'_{M,\gamma_1}$ and $x,y \in K$.
\end{theorem}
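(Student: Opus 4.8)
The plan is to localize the problem, reduce it by slicing with complex lines to a one–dimensional statement on a disc in $\C$, prove that statement by exploiting the conformal invariance of the Dirichlet energy in real dimension two, and then reassemble by integrating over the family of lines; both assertions are proved by the same scheme, only the input modulus of $\psi$ changing. \emph{Localization.} Fixing $K\Subset U$ and $\rho_0=\tfrac12\dist(K,\partial U)$, one may assume $\psi$ is psh on a neighbourhood of $\overline U$, bounded, with the stated modulus and $du\wedge\dc u\le\ddc\psi$. Only oscillations of $u$ matter, and $\int_{B(x,2\rho_0)}\ddc\psi\wedge\omega_{\C^n}^{n-1}$ is bounded in terms of $M,\rho_0$ by Chern--Levine--Nirenberg, so $u$ has uniformly bounded Dirichlet energy on the balls $B(x,2\rho_0)$, $x\in K$.

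\emph{The one–dimensional estimate.} On a disc $\Delta(0,\rho_0)\subset\C$, let $v\in W^{1,2}$ satisfy $dv\wedge\dc v\le\ddc\phi$ with $\phi$ subharmonic and bounded; write $\sigma(r)$ for the average of $\phi$ on $\partial\Delta(0,r)$ and $m(r)$ for the mass of $\ddc\phi$ on $\Delta(0,r)$. By Lelong--Jensen, $\int_0^\delta m(r)\,\tfrac{dr}{r}=2\pi(\sigma(\delta)-\phi(0))$, which is $\lesssim M/(\log\log(1/\delta))^{1+\gamma_0}$ in the first case and $\lesssim\|\phi\|_{\mathcal{C}^{\gamma_1}}\delta^{\gamma_1}$ in the second; in particular $m$ is so small on small annuli that a Dini–type control holds. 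Since the Dirichlet integral of $v$ is conformally invariant, passing to the half–cylinder coordinates $(t,\theta)=(-\log|z|,\arg z)$ the energy of $v$ over $\{t_1\le -\log|z|\le t_2\}$ is $\le m(e^{-t_1})$, and Cauchy--Schwarz in $t$ gives for the circle averages $\bar v(r)$ the bound
\[
|\bar v(e^{-t_1})-\bar v(e^{-t_2})|\ \le\ C\big((t_2-t_1)\cdot(\text{mass of }\ddc\phi\text{ on the annulus }\{e^{-t_2}<|z|<e^{-t_1}\})\big)^{1/2}.
\]
Telescoping over radii $\rho_0\ge\rho_1\ge\cdots$, the oscillation of $v$ near $0$ is bounded by $\sum_k\sqrt{\ell_k a_k}$, with $\ell_k$ the logarithmic width and $a_k$ the mass of $\ddc\phi$ on the $k$-th annulus, $\sum_k a_k$ being the small total mass near $0$; one also passes from $\bar v(\rho_k)$ to $v(0)$ using well–chosen ``good'' circles (a positive fraction of circles in each annulus carry small oscillation). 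In the Hölder case $a_k$ decays geometrically and the series converges geometrically, giving $|v(0)-v(z)|\lesssim|z|^{\gamma_1/2}$. In the $\log^{1+\gamma_0}\log$ case the Dini integral decays like $(\log\log(1/\cdot))^{-(1+\gamma_0)}$, which forces the mass of $\ddc\phi$ to sit on thin annuli; choosing the partition $\{\rho_k\}$ adaptively one then gets $|v(0)-v(z)|\lesssim(\log\log(1/|z|))^{-\gamma/2}$ for any $\gamma<\gamma_0$. The factor $\tfrac12$ in both exponents is the unavoidable cost of the square root coming from $dv\wedge\dc v$.

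\emph{From $\C^n$ back to $\C$.} For $x\in K$ and $r\le\rho_0$ one slices by complex lines through $x$: for a.e.\ direction $a\in\P^{n-1}$ the restrictions of $du\wedge\dc u$ and of $\ddc\psi$ to $\ell_a=\{x+\zeta a\}$ are well–defined positive $(1,1)$–currents on $\ell_a\cap U$ with $(du\wedge\dc u)|_{\ell_a}\le(\ddc\psi)|_{\ell_a}=\ddc(\psi|_{\ell_a})$ (slicing preserves the order of positive currents) and $u|_{\ell_a}\in W^{1,2}$ of the disc. Decisively, the modulus of continuity of $\psi$ is inherited at $x$ by \emph{every} slice $\psi|_{\ell_a}$ with the same constant, since $|\psi(x+\zeta a)-\psi(x)|$ depends only on $|\zeta|$; hence Step 2 applies to each good slice with the \emph{same} input and produces the \emph{same} bound $\omega_u(r)$ (where $\omega_u(r)\lesssim(\log\log(1/r))^{-\gamma/2}$, resp.\ $\lesssim r^{\gamma_1'}$) for $|u(x)-(\text{disc--average of }u\text{ on }\ell_a\cap B(x,r))|$. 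Averaging over $a\in\P^{n-1}$ and using the Crofton identity $\tfrac1{\vol B(x,r)}\int_{B(x,r)}u=\int_{\P^{n-1}}(\text{weighted disc--average of }u\text{ on }\ell_a)\,d\mathrm{FS}(a)$ gives $\big|u(x)-\tfrac1{\vol B(x,r)}\int_{B(x,r)}u\big|\le \omega_u(r)$, uniformly in $x\in K$. The term $\big|\tfrac1{\vol B(x,r)}\int_{B(x,r)}u-\tfrac1{\vol B(y,r)}\int_{B(y,r)}u\big|$ for $|x-y|\le r$ is handled by the classical $W^{1,2}$–Poincaré inequality on $B(x,3r)$ together with $\int_{B(x,3r)}\ddc\psi\wedge\omega_{\C^n}^{n-1}\lesssim r^{2n-2}\omega_\psi(r)$, $\omega_\psi(r)=M/(\log\log(1/r))^{1+\gamma_0}$ (Lelong--Jensen again), which makes it $\lesssim\omega_\psi(r)^{1/2}\le\omega_u(r)$. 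Taking $r\asymp|x-y|$ and combining yields the theorem, and the uniform convergence of the averages as $r\to0$ produces the continuous representative of $u$ for which the pointwise bound holds.

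\emph{Main obstacle.} The crux is the $\log^{1+\gamma_0}\log$ case of the one–dimensional estimate: because the mass of $\ddc\phi$ on the annulus at logarithmic scale $t$ decays only polynomially in $t$ — unlike the exponential decay in the Hölder case — one cannot estimate the telescoping series term by term, and must instead exploit that the bound on $\int_0^\delta m(r)\,dr/r$ forces the mass onto thin annuli, choosing the partition $\{\rho_k\}$ accordingly; this is precisely where the loss ``$\gamma<\gamma_0$'' (resp.\ ``$\gamma_1'<\gamma_1/2$'') enters. A secondary point is that $W^{1,2}$ in real dimension two does not embed in $L^\infty$, so one works throughout with averages (equivalently with the quasi–continuous representative) and only passes to genuine pointwise values at the end; and the ``a.e.\ line'' and Crofton bookkeeping in the passage from $\C^n$ to $\C$, while routine, must be carried out with care.
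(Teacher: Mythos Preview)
Your route is genuinely different from the paper's. The paper first mollifies so that $u,\psi$ are smooth, then restricts to the \emph{single} complex line through $x$ and $y$ (no Crofton averaging is needed). In dimension one it represents $u(x)-u(y)$ via the Green kernels, $u(x)-u(y)=\lim_{\epsilon\to0}\int\rho\,u\,\ddc(g_{x,\epsilon}-g_{y,\epsilon})$, integrates by parts, and applies a \emph{weighted} Cauchy--Schwarz with weight $\tilde p=p(g_x+g_y)$, where $p(t)=(-t)\big(\log(-t)\big)^{1+\gamma}$ in the $\log^{1+\gamma_0}\log$ case and $p(t)=e^{-\gamma t}$ in the H\"older case. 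One of the resulting integrals is bounded using $du\wedge\dc u\le\ddc\psi$ and a further integration by parts that transfers $\ddc$ onto $p\circ g_x$, after which the modulus of $\psi$ enters directly; the other is a concrete kernel estimate on three regions.

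Your H\"older case is essentially correct: from Lelong--Jensen one gets $m(r)\lesssim r^{\gamma_1}$, hence the Dirichlet energy on $B_r$ decays geometrically in dyadic scales and the circle-average telescoping converges to give the $|z|^{\gamma_1/2}$ modulus --- this is the classical Campanato--Morrey mechanism.

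The $\log^{1+\gamma_0}\log$ case, however, has a real gap at exactly the place you flag as the ``main obstacle''. In cylinder coordinates $t=-\log|z|$, the Dini control reads $\int_T^\infty M(t)\,dt\lesssim(\log T)^{-(1+\gamma_0)}$ with $M(t)=m(e^{-t})$ nonincreasing; this only forces $M(t)\lesssim t^{-1}(\log t)^{-(1+\gamma_0)}$. With the natural double-dyadic partition $s_k=2^k$ one has $\ell_k\sim 2^k$, $a_k\le M(s_{k-1})\lesssim 2^{-k}k^{-(1+\gamma_0)}$, and the tail of $\sum_k\sqrt{\ell_k a_k}$ behaves like $\sum k^{-(1+\gamma_0)/2}$, which converges only when $\gamma_0>1$. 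Your heuristic that the Dini bound ``forces the mass onto thin annuli'' is not the right picture: the extremal profile is the spread-out one $M(t)\sim t^{-1}(\log t)^{-(2+\gamma_0)}$. What actually closes the argument for all $\gamma_0>0$ is a weighted Cauchy--Schwarz
\[
\sum_k\sqrt{\ell_k a_k}\ \le\ \Big(\sum_k \ell_k/w_k\Big)^{1/2}\Big(\sum_k w_k a_k\Big)^{1/2},\qquad w_k\sim s_k(\log s_k)^{1+\gamma},
\]
together with an Abel summation that feeds the full tail bound on $\int M$ (not merely the pointwise bound on $M$) into $\sum w_k a_k$. But this weight is exactly the paper's $p(t)=(-t)(\log(-t))^{1+\gamma}$ read in cylinder coordinates, and the Abel step is exactly the paper's integration by parts that produces $\int(\psi-\psi(x))\,\ddc(p\circ g_x)$. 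So the ingredient missing from your outline is precisely the key idea of the paper; ``choosing the partition adaptively'' does not by itself supply it.
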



A compact (K\"ahler) version for a more specific class of $u$ was proved recently in \cite[Theorems 3.4 and 4.4]{GGZ-logcontiu} for $\log^{1+\gamma_0}\log$-continuity and \cite[Theorem 4.1]{YangLi} for H\"older continuity (and $\gamma'_1= \gamma_1/2$). To go into details, let us consider a compact K\"ahler manifold $X$. Let $\omega$ be a smooth K\"ahler form on $X$ and $d_\omega$ be the Riemannian distance induced by $\omega$. Let $x_0 \in X$ and $f(x):=d_\omega(x_0,x)$ for $x  \in X$.
The proofs of \cite[Theorems 3.4 and 4.4]{GGZ-logcontiu} use extensively the fact that   $df \wedge \dc f \le \omega$. Such an inequality was used several times in other previous papers such as \cite{Guo-Phong-Song-Sturm,YangLi}.  

The above estimate for $f$ also holds for singular metrics as observed in \cite{GGZ-logcontiu}. To be precise, let $T$ be a closed positive $(1,1)$-current on $X$ such that $T$ is a smooth K\"ahler form on the complement of a proper analytic subset $V$ in $X$. Let $d_T$ be the Riemannian distance induced by $T$ on $X \backslash V$. Let $x_0 \in X \backslash V$ and let $f(x):= d_T(x_0, x)$ for $x \in X \backslash V$. Then we have $df \wedge \dc f \le T$ on $X \backslash V$ (see Lemma \ref{le-dfwedgedcf} below). This, in particular, implies that $df \in L^2(X)$. Hence by \cite[Proposition 3.1]{DS_decay}, one sees that $f \in W^{1,2}(X)$ and the inequality $df \wedge \dc f \le T$ holds as currents on $X$. In this context, it was proved in \cite{GGZ-logcontiu} that if $T$ has  $\log^{1+\gamma_0} \log$ continuous potentials and $\gamma \in (0,\gamma_0)$ is a constant, then $f$ is $\log^{\gamma/2} \log$-continuous with uniform constants.  The corresponding statement for H\"older regularity was established in \cite{YangLi} for $\gamma'_1=\gamma_1/2$ (see also \cite{Guo-Phong-Tong-Wang}).  
  We state now more general version of these results for smooth holomorphic families of Hermitian manifolds.

\begin{corollary}\label{cor-family} Let $\pi: \mathcal{X} \to Y$ be a proper holomorphic submersion, where $\mathcal{X}$ and $Y$ are complex manifolds. Let $\omega$ be a Hermitian metric on $\mathcal{X}$. Let $\psi$ be an $\omega$-psh function which is  $\log^{1+\gamma_0}\log$-continuous on $\mathcal{X}$ such that $\psi$ is smooth outside some proper analytic subset  $V \subset \mathcal{X}$. Let $T:= \ddc \psi + \omega$, and for every $y \in Y$, put $X_y:= \pi^{-1}(y)$. Let $d_{T,y}$ be the Riemannian distance induced by $T$ on $X_y \backslash V$ (if $X_y \not \subset V$).  Then for every compact $K \subset Y$ and for every $\gamma \in (0, \gamma_0)$, there exists a  constant $C_\gamma>0$ such that for every $y \in K$, if $X_y \not \subset V$, then  we have
$$d_{T,y}(x_1,x_2) \le \frac{C_\gamma}{ \max\{\log^{\gamma/2}|\log |x_1-x_2|,1\}}$$  
for every $x_1,x_2 \in X_y \backslash V$, where $|x_1-x_2|$ denotes the distance between $x_1,x_2$ induced by $\omega$ on $\mathcal{X}$.  In particular the diameters of $(X_y\backslash V,d_{T,y})$ is uniformly bounded for $y \in K$ (and bounded by a constant depending only on $\|\psi\|_{\log^{1+\gamma_0} \log(\mathcal{X})}$ and $\omega, \pi$). 

Moreover if $\psi$ is H\"older continuous with H\"older exponent $\gamma_1 \in (0, 1]$, then for every $\gamma'_1 \in (0,\gamma_1/2)$, there holds 
$$d_{T,y}(x_1,x_2) \le C_{\gamma'_1} |x_1-x_2|^{\gamma'_1}$$  
for every $x_1,x_2 \in X_y \backslash V$ and $y \in K$.
\end{corollary}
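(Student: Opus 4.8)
\emph{Proof plan.} The plan is to reduce the corollary, one fibre at a time and inside finitely many coordinate charts of $\mathcal{X}$, to a direct application of Theorem~\ref{th-bichanLinfinity}; the only non-formal issue will be keeping every constant uniform in $y\in K$. \emph{(Localisation.)} Since $\pi$ is a proper submersion and $K\subset Y$ is compact, $\pi^{-1}(K)$ is compact, hence covered by finitely many charts $\Omega_1,\dots,\Omega_N$ in which $\pi$ is the standard projection; I write $\Omega_j\cong U_j\times B_j$ with $U_j\subset\C^n$ ($n=\dim X_y$), $B_j\subset\C^m$ bounded open and $\pi(z,w)=w$, fix triply nested open sets $\Omega_j''\Subset\Omega_j'\Subset\Omega_j$ whose $\Omega_j''$ still cover $\pi^{-1}(K)$, with corresponding $U_j''\Subset U_j'\Subset U_j$, and let $\delta_0>0$ be a Lebesgue number of $\{\Omega_j''\}$ for the $\omega$-distance on $\mathcal{X}$. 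Now fix $y\in K$ with $X_y\not\subset V$ and any $x_1\in X_y\backslash V$, and set $f(x):=d_{T,y}(x_1,x)$ on $X_y\backslash V$. As $T=\ddc\psi+\omega$ is a smooth positive form on $\mathcal{X}\backslash V$, $f$ is finite and continuous on $X_y\backslash V$; by Lemma~\ref{le-dfwedgedcf} we have $df\wedge\dc f\le T$ there, hence, exactly as in the discussion preceding the corollary (via \cite[Proposition 3.1]{DS_decay}), $df\in L^2(X_y)$ and $df\wedge\dc f\le T$ as currents on $X_y$; so the truncation $f_R:=\min(f,R)$ is bounded and satisfies $df_R\wedge\dc f_R\le df\wedge\dc f\le T$.

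\emph{(Fibrewise comparison and Theorem~\ref{th-bichanLinfinity}.)} Fix $j$ with $\Omega_j\cap X_y\neq\emptyset$ and work in the coordinates $(z,w)$ on $\Omega_j$; write $\iota_y(z):=(z,y)$. Since $\omega$ is smooth and $\psi$ is $\omega$-psh, after enlarging a constant $A_j>0$ I may assume that $\rho_j(z,w):=\psi(z,w)+A_j(|z|^2+|w|^2)$ is psh on $\Omega_j$ and that $\iota_y^*\omega\le A_j\,\ddc(|z|^2)$ on $U_j$ for all admissible $y$. Then $\varphi_{j,y}(z):=\rho_j(z,y)$ is psh on $U_j$ (restriction of the psh $\rho_j$ to $\{w=y\}$), bounded, and --- since $\psi$ is $\log^{1+\gamma_0}\log$-continuous on $\mathcal{X}$ while the added term is smooth --- it is $\log^{1+\gamma_0}\log$-continuous on $U_j'$ with $\|\varphi_{j,y}\|_{\log^{1+\gamma_0}\log(U_j')}\le M$, where $M$ depends only on $\|\psi\|_{\log^{1+\gamma_0}\log(\mathcal{X})}$, $\omega$, $\pi$ and the fixed charts, and \emph{not} on $y$. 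On $U_j'$, as currents, $T|_{X_y}=\iota_y^*T=\ddc(\psi(\cdot,y))+\iota_y^*\omega\le\ddc\bigl(\psi(\cdot,y)+A_j|z|^2\bigr)=\ddc\varphi_{j,y}$ (the constant $A_j|y|^2$ being irrelevant), so $df_R\wedge\dc f_R\le\ddc\varphi_{j,y}$ on $U_j'$; as $\varphi_{j,y}$ is bounded psh on $U_j$, $\ddc\varphi_{j,y}$ has finite mass on $U_j'$, so (recall $f_R$ is bounded) $f_R|_{U_j'}$ belongs to $\mathcal{A}_{M,\gamma_0}(U_j')$ for every $R$, with one and the same $M$. (If $\psi$ is $\mathcal{C}^{\gamma_1}$, the same construction gives $\varphi_{j,y}\in\mathcal{C}^{\gamma_1}$ with uniformly bounded norm, so $f_R|_{U_j'}\in\mathcal{A}'_{M,\gamma_1}(U_j')$.) Applying Theorem~\ref{th-bichanLinfinity} with $U=U_j'$ and compact set $\overline{U_j''}$, for fixed $\gamma\in(0,\gamma_0)$ I get a constant $C_\gamma$ (the maximum over $j$ of those it provides, independent of $y$, $x_1$, $R$) with $|f_R(x)-f_R(x')|\le C_\gamma/\max\{\log^{\gamma/2}|\log|x-x'||,1\}$ for $x,x'\in\overline{U_j''}$; letting $R\to\infty$ (so $f_R\to f$ wherever $f$ is finite) the same holds for $f$.

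\emph{(Globalisation.)} Let $x_1,x_2\in X_y\backslash V$. If $|x_1-x_2|<\delta_0$ they lie in one $\Omega_j''$; taking the base point above to be this $x_1$ and using $f(x_1)=0$ and $f(x_2)=d_{T,y}(x_1,x_2)<\infty$ (finite because $x_1,x_2$ lie in one component of $X_y\backslash V$, on which $T$ is smooth) yields $d_{T,y}(x_1,x_2)\le C_\gamma/\max\{\log^{\gamma/2}|\log|x_1-x_2||,1\}$. If $|x_1-x_2|\ge\delta_0$, a standard covering argument (finitely many small $\omega$-balls for $\pi^{-1}(K)$, plus connectedness of each component of $X_y\backslash V$) gives a chain $x_1=p_0,\dots,p_\ell=x_2$ in $X_y\backslash V$ with $|p_{i-1}-p_i|<\delta_0$ and $\ell\le\ell_0$ for some $\ell_0=\ell_0(K,\omega)$, so $d_{T,y}(x_1,x_2)\le\sum_i d_{T,y}(p_{i-1},p_i)\le\ell_0C_\gamma$, which is absorbed into the right-hand side since $\max\{\cdot,1\}\ge1$. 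After comparing, on the fixed charts, the Euclidean and $\omega$-distances (only changing constants), this gives the asserted estimate with the constant $\ell_0C_\gamma$; the H\"older statement follows identically from the second part of Theorem~\ref{th-bichanLinfinity}, for any $\gamma'_1\in(0,\gamma_1/2)$. The uniform bound on the diameters of $(X_y\backslash V,d_{T,y})$ is then immediate ($\max\{\cdot,1\}\ge1$), and one reads off from the construction that the final constant depends only on $\|\psi\|_{\log^{1+\gamma_0}\log(\mathcal{X})}$, $\omega$, $\pi$ (and $K$).

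\emph{(Main obstacle.)} Granted Theorem~\ref{th-bichanLinfinity}, the rest is largely bookkeeping, and the one point deserving care is the uniformity in $y\in K$ of the norm $M$ of the slice potentials $\varphi_{j,y}$: this is exactly what makes a single globally defined $\log^{1+\gamma_0}\log$-continuous (resp.\ H\"older) $\omega$-psh potential on $\mathcal{X}$ produce members of $\mathcal{A}_{M,\gamma_0}$ (resp.\ $\mathcal{A}'_{M,\gamma_1}$) with constants independent of the fibre. The remaining technical steps --- extending the fibrewise distance function across $V$ and getting it into $W^*$ (done by truncation), and the uniform chaining for distant points --- are routine.
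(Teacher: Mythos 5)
Your proof follows essentially the same route as the paper's: cover $\pi^{-1}(K)$ by finitely many product charts adapted to the submersion, restrict $T$, $\omega$, $\psi$ to each fibre, build a psh slice potential $\varphi_{j,y}$ with a $\log^{1+\gamma_0}\log$- (resp.\ H\"older-) norm bounded uniformly in $y$ that dominates $df\wedge\dc f$, and invoke Theorem~\ref{th-bichanLinfinity}. The extra care you take (truncating the fibrewise distance function to guarantee membership in $W^*$, and chaining through a bounded number of small balls for far-apart points) is correct and merely makes explicit some steps the paper leaves to the reader.
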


We refer to recent works \cite{Guo-Phong-Song-Sturm,Guo-Phong-Song-Sturm2} by Guo-Phong-Song-Sturm for  very strong general results on uniform diameter bound for K\"ahler metrics. The difference of our results (as well as those in \cite{GGZ-logcontiu,YangLi})  to \cite{Guo-Phong-Song-Sturm,Guo-Phong-Song-Sturm2} is that we don't require the Monge-Amp\`ere measure of the metric has a small vanishing locus (with respect to a fixed volume form). More importantly, Corollary \ref{cor-family} is true in the Hermitian setting (as far as we can see, the proof of \cite[Theorem 4.4]{GGZ-logcontiu} does not extend immediately to the Hermitian setting). 

We will prove Theorem \ref{th-bichanLinfinity} as follows.  By using slicing of currents, we reduce  the problem to the case of dimension 1. The desired continuity is  now obtained by repeating concrete computations in \cite{GGZ-logcontiu}. 
\\

\noindent
\textbf{Acknowledgments.} We thank Ngoc Cuong Nguyen, Henri Guenancia, and Gabriel Vigny for fruitful discussions. We also want to express our great gratitude to the anonymous referee for his/her careful reading and suggestions.  The research of the author is partially supported by the Deutsche Forschungsgemeinschaft (DFG, German Research Foundation)-Projektnummer 500055552 and by the ANR-DFG grant QuaSiDy, grant no ANR-21-CE40-0016.

\section{Proof of main results}

In this section, we first prove Theorem \ref{th-bichanLinfinity}.   We start with the following known observation.

\begin{lemma}\label{le-dfwedgedcf} Let $(X,\omega)$ be a Hermitian manifold. Let $d_\omega$ be the Riemannian distance induced by $\omega$.  Let $x_0 \in X$ and $f(x):= d_\omega(x_0,x)$ for $x \in X$. Then we have 
$$df \wedge \dc f \le \omega$$
as currents on $X$. 
\end{lemma}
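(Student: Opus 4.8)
The assertion is local and compares two positive $(1,1)$-currents, so the plan is to reduce it to a pointwise inequality of forms almost everywhere and then settle that by a one-line computation in Hermitian linear algebra. We may assume $X$ connected (replacing $X$ by the connected component of $x_0$).

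First I would record two elementary facts about $f=d_\omega(x_0,\cdot)$. By the triangle inequality $|f(x)-f(y)|\le d_\omega(x,y)$, so $f$ is $1$-Lipschitz for $d_\omega$; and in any holomorphic chart $d_\omega(x,y)$ is bounded above by the $\omega$-length of the Euclidean segment $[x,y]$, hence by a constant multiple of $|x-y|$ on a relatively compact subchart, so $f$ is locally Lipschitz for the Euclidean distance. Rademacher's theorem then gives that $f$ is differentiable a.e.\ and that $df$ is an $L^\infty_{\mathrm{loc}}$ $1$-form; accordingly $df\wedge\dc f=\tfrac{1}{\pi}\, i\,\partial f\wedge\bar\partial f$ is a genuine $L^\infty_{\mathrm{loc}}$ semi-positive $(1,1)$-form (not just a current obtained through some regularization), and it is enough to prove the pointwise bound $(df\wedge\dc f)(p)\le\omega(p)$ at every point $p$ of differentiability.

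Next, fixing such a $p$, let $g$ be the Riemannian metric associated with $\omega$ and choose holomorphic coordinates centered at $p$ in which $\omega(p)$ is diagonal. Applying the $1$-Lipschitz property along unit-speed $g$-geodesics issuing from $p$ yields $|df_p(v)|\le|v|_g$ for all $v\in T_pX$, i.e.\ $|\nabla_g f|_g(p)\le1$. On the other hand $\alpha:=(df\wedge\dc f)(p)$ is semi-positive of rank $\le1$, so simultaneous diagonalization of $\alpha$ and $\omega(p)$ gives $\alpha\le(\operatorname{tr}_{\omega(p)}\alpha)\,\omega(p)$, while a direct computation in the chosen coordinates gives $\operatorname{tr}_{\omega(p)}\alpha=\tfrac{1}{2\pi}\,|\nabla_g f|_g^2(p)\le\tfrac{1}{2\pi}\le1$. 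Hence $\alpha\le\omega(p)$, which is what was needed; combining over all points of differentiability finishes the proof.

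I do not expect a genuine obstacle: the whole content is the $1$-Lipschitz bound combined with the rank-one trace inequality. The two places that deserve a sentence of care are the passage from ``inequality a.e.'' to ``inequality of currents'', which is clean precisely because local Lipschitz continuity of $f$ makes $df\wedge\dc f$ an honest bounded form, and keeping straight that the Lipschitz constant $1$ is measured with respect to $d_\omega$ (equivalently $g$) — exactly the metric that reappears in the trace computation.
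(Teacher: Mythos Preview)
Your proof is correct and follows essentially the same route as the paper: reduce to a pointwise inequality via the $1$-Lipschitz bound and Rademacher, diagonalize $\omega$ at the point, and compare the trace of $i\partial f\wedge\bar\partial f$ with respect to $\omega$ to $|\nabla_g f|_g^2\le1$. The only cosmetic difference is that you invoke the rank-one inequality $\alpha\le(\operatorname{tr}_\omega\alpha)\,\omega$ directly, whereas the paper writes out the eigenvalues $a_j\ge0$ with $\sum a_j\le 1/2$ and concludes $a_j\le1$; your added remark that local Lipschitz continuity makes $df\wedge\dc f$ an honest $L^\infty_{\mathrm{loc}}$ form (so the a.e.\ inequality is an inequality of currents) is a point the paper leaves implicit.
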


\proof We include a proof for readers' convenience. The desired inequality is pointwise. Hence it suffices to work in a  local chart $(U, z_1,\ldots,z_n)$ near a point $x \in X$. Observe that $f$ is Lipschitz with the Lipschitz norm bounded by 1 because of the triangle inequality. Thus $f$ is differentiable almost everywhere and $|\nabla f|_\omega \le 1$. 
Diagonalizing $\omega$ at $x$, we get $$\omega= i \sum_{j=1}^n d z_j \wedge d \bar z_j$$
at $x$.  Hence 
$$1= |\nabla f(x)|_\omega^2 = 2\sum_{j=1}^n |\partial_{z_j} f(x)|^2.$$
It follows that the trace of the form  $\eta:= i \partial f \wedge \bar \partial f$ with respect to $\omega$ at $x$ is equal to $1/2$. Using a unitary transform at $x$ to diagonalize $\eta$ at $x$, we see that 
$$\eta= \sum_{j=1}^n a_j d z_j \wedge d \bar z_j,$$
for $a_j \ge 0$ and $\sum_{j=1}^n a_j=1/2$. Consequently we obtain $\eta \le \omega$ at $x$. This combined with the fact that 
$$df \wedge \dc f = \frac{i}{\pi} \partial f \wedge \bar \partial f$$
gives the desired inequality.  This finishes the proof. 
\endproof

Next we recall a fact about slicing of currents. Let $U$ and $V$ be bounded open subsets of $\C^{m_1}$ and $\C^{m_2}$ respectively. Let $\pi_U: U \times V \to U$ and $\pi_V: U \times V \to V$ be the natural projections. 

Consider now a closed positive $(1,1)$-current $R$ on $U \times V$.  Write $R= \ddc w$ locally, where $w$ is a psh  function. Let $A$ be the set of $z$ so that  $w(\cdot, z) \equiv -\infty$. Observe that $A$ is pluripolar. To see it, let $K$ be a compact subset in $U$ having a non-empty interior and let $\Leb_U$ be the Lebesgue measure on $U$. Then $\int_{x \in K} w(x,z) d \Leb_U$ is a psh function on $V$ and this function is equal to $-\infty$ on $A$. This implies that $A$ is pluripolar. 

For $z\in V \backslash A$, we define the slice $R_z$ of  $R$ on $\pi_V^{-1}(z)$ to be $\ddc \big(w(\cdot, z)\big)$ which is a closed positive $(1,1)$-current on $\pi_V^{-1}(z)$. One can see that the definition  of the slice $R_z$ is independent of the choice of a local potential $w$ of $R$. 
 

\begin{lemma} \label{le-chondiemtam2} (\cite[Lemma 3.4]{DinhMarinescuVu}) Let $u$ be a locally integrable function in $U \times V$ such that $\partial u \in L^2_{loc}(U \times V)$.  Let $T$ be   a closed positive $(1,1)$-current  on $U \times V$ such that $i \partial u \wedge \overline \partial u \le T.$   Then, for almost every $z \in V$, we have that $\partial (u |_{U \times \{z\}}) \in L^2_{loc}(U)$ and  
\begin{align}\label{ine-sliceuPsix'W12}
i \partial (u |_{U \times \{z\}}) \wedge \overline \partial (u |_{U \times \{z\}}) \le T |_{U \times \{z\}}.
\end{align}
\end{lemma}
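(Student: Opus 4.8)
The plan is to transfer the hypothesis to smooth approximants $u^\delta$ of $u$, for which restriction to a fibre $U\times\{z\}$ is merely pullback by a holomorphic immersion and hence automatically preserves positivity, and then to slice and let $\delta\to 0$. Since the desired conclusion -- a property of $u_z:=u|_{U\times\{z\}}$ and of the slice $T_z$ on $U$, for a.e.\ $z\in V$ -- is local and currents are local, I would first cover $U\times V$ by countably many product polydiscs $U'\times V'\Subset U\times V$, prove the assertion on each (for a.e.\ $z\in V'$, with the slice inequality on $U'$), and then discard the union of the exceptional null sets. On a fixed such polydisc, write $T=\ddc w$ with $w$ psh, let $\rho_\delta$ be a standard radial approximation of the identity on $\C^{m_1+m_2}$, and put $u^\delta:=u*\rho_\delta$, $w^\delta:=w*\rho_\delta$, smooth on a slightly smaller polydisc with $w^\delta\searrow w$. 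The key observation is that $i\partial u\wedge\bar\partial u\le T$ is inherited by the mollifications in a strong, pointwise form: Jensen's inequality for the probability density $\rho_\delta$ gives, for every constant covector $\xi$, $|\partial_\xi u^\delta|^2=|(\partial_\xi u)*\rho_\delta|^2\le(|\partial_\xi u|^2)*\rho_\delta$ pointwise, which is precisely the matrix inequality $i\partial u^\delta\wedge\bar\partial u^\delta\le(i\partial u\wedge\bar\partial u)*\rho_\delta$; convolving the hypothesis $T-i\partial u\wedge\bar\partial u\ge 0$ by $\rho_\delta\ge 0$ gives $(i\partial u\wedge\bar\partial u)*\rho_\delta\le T*\rho_\delta=\ddc w^\delta$, so that
$$i\partial u^\delta\wedge\bar\partial u^\delta\le\ddc w^\delta$$
is an inequality of \emph{smooth} positive $(1,1)$-forms.

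Next I would restrict to the fibre. With $\iota_z:U'\to U'\times V'$, $x\mapsto(x,z)$, the pullback $\iota_z^*$ preserves positivity of forms and commutes with $\partial$ and $\bar\partial$, and $\iota_z^*u^\delta=:u^\delta_z$, $\iota_z^*w^\delta=w^\delta(\cdot,z)$; hence, for every $z$ and every small $\delta$,
$$i\partial u^\delta_z\wedge\bar\partial u^\delta_z\le\ddc\bigl(w^\delta(\cdot,z)\bigr)\qquad\text{on }U'.$$

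Finally, let $\delta\to 0$. A Fubini argument -- using $u^\delta\to u$ in $L^1_{loc}$, $(\partial u)*\rho_\delta\to\partial u$ in $L^2_{loc}$, and the absolute-continuity-on-lines fact that $\partial_{x_j}u_z=(\partial_{x_j}u)(\cdot,z)$ for a.e.\ $z$ -- shows that, after passing to a subsequence $\delta_k\to 0$, for a.e.\ $z\in V'$ one has simultaneously $u^{\delta_k}_z\to u_z$ in $L^1_{loc}(U')$ and $\partial u^{\delta_k}_z\to\partial u_z$ in $L^2_{loc}(U')$; in particular $\partial u_z\in L^2_{loc}(U')$ and $i\partial u^{\delta_k}_z\wedge\bar\partial u^{\delta_k}_z\to i\partial u_z\wedge\bar\partial u_z$ weakly. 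On the other hand, for $z$ outside the pluripolar exceptional set $A$, the psh functions $w^{\delta_k}(\cdot,z)$ decrease to the psh function $w(\cdot,z)$, whence $\ddc\bigl(w^{\delta_k}(\cdot,z)\bigr)\to T_z=\ddc\bigl(w(\cdot,z)\bigr)$ weakly. Passing to the limit in the fibrewise inequality yields $i\partial u_z\wedge\bar\partial u_z\le T_z$ for a.e.\ $z\in V'$, and the covering reduction finishes the proof.

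The conceptual heart is the Jensen step, which upgrades the current inequality $i\partial u\wedge\bar\partial u\le T$ to the smooth inequality $i\partial u^\delta\wedge\bar\partial u^\delta\le\ddc w^\delta$ and thereby makes slicing trivial; the only genuinely fiddly point is the limit step, i.e.\ checking that mollification and restriction to fibres commute up to errors that vanish, for a.e.\ $z$, in $L^1_{loc}$ and $L^2_{loc}$ respectively.
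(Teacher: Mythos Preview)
The paper does not prove this lemma; it simply cites \cite[Lemma~3.4]{DinhMarinescuVu}. Your argument is correct and is essentially the standard one: the mollification inequality $i\partial u^\delta\wedge\bar\partial u^\delta\le\ddc w^\delta$ you obtain via Jensen/Cauchy--Schwarz is precisely the content of \cite[Lemma~5]{Vigny} and \cite[Lemma~3.3]{DinhMarinescuVu} (invoked elsewhere in the paper), the fibrewise restriction of smooth positive forms is unproblematic, and the passage to the limit along a subsequence using Fubini, the ACL characterisation of Sobolev functions, and the monotone convergence $w^\delta(\cdot,z)\searrow w(\cdot,z)$ for $z\notin A$ is handled correctly.
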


We note that if $T$ has $\log^\gamma\log$-continuous potentials, then so does $T|_{U\times \{z\}}$. 

\begin{proof}[Proof of Theorem \ref{th-bichanLinfinity}]
We first prove the case where $\psi$ is $\log^{1+\gamma_0} \log$-continuous. Let $u_\epsilon$, $\psi_\epsilon$ be the standard convolutions of $u,\psi$ respectively. We have $u_\epsilon \to u$ in $L^2$ (see \cite{Vigny-Vu-Lebesgue,DinhMarinescuVu} for a much better property that $u_\epsilon$ converges pointwise outside a pluripolar set and in capacity to $u$, but we don't need this in the proof), and 
$$d u_\epsilon \wedge \dc u_\epsilon \le \ddc \psi_\epsilon,$$
(see \cite[Lemma 5]{Vigny} or \cite[Lemma 3.3]{DinhMarinescuVu}). Hence $\psi_\epsilon$ is of the following form:
$$\psi_\epsilon(x)= \int_U \psi(x-z) \chi_\epsilon(z) \omega_{\C^n}^n,$$
for $x \in U$ with $\dist(x, \partial U)> \epsilon$, where $\chi_\epsilon$ is smooth and supported on the ball of radius $\epsilon$ centred at $0$ in $\C^n$ such that $\int_U \chi_\epsilon \omega_{\C^n}^n =1$. It follows that 
$$|\psi_\epsilon(x) - \psi_\epsilon(y)| \le \int_U |\psi(x-z)- \psi(y-z)| \chi_\epsilon(z) \omega_{\C^n}^n \le \frac{B}{\max\{\log^\gamma |\log |x-y|,1\}}$$
for some constant $B>0$ independent of $x,y, \epsilon$ with $\dist(x, \partial U)> \epsilon$, $\dist(y, \partial U)> \epsilon$. In other words, the norm $\|\psi_\epsilon\|_{\log^{1+\gamma_0 \log(U_1)}}$ is bounded uniformly in $\epsilon$.  Hence without loss of generality, we can assume that $u$ and $\psi$ are smooth. 

By Lemma \ref{le-chondiemtam2} and slicing currents by the complex lines passing through a point in $K \subset U$, we see that it suffices to treat the case $n=1$. At this point we follow more or less computations in \cite{GGZ-logcontiu}. The proof in \cite{GGZ-logcontiu} makes use of pluricomplex functions. We notice that a similar idea, using (Riemannian) Green function instead, was used previously in \cite{Guo-Phong-Song-Sturm}.  We recall details  for readers' convenience.

Now as just mentioned above, we assume $n=1$. Since the problem is local, we can assume that $U$ is a bounded connected subset in $\C$ with a smooth boundary, and $u \in \mathcal{A}_{M,\gamma_0}(U')$ for some open set $U'$ containing $\overline U$. Observe now that by Poincar\'e inequality, there is a constant $C>0$ independent of $u$ such that 
$$\int_U \bigg|u- \int_U u \omega_\C \bigg|^2 \omega_\C  \le C \int_U du \wedge \dc u \le C \int_U \ddc \psi  \lesssim M.$$
Hence, by considering $u- \int_U  u \omega_\C$ in place of $u$, we can assume that \begin{align}\label{ine-chuansaocuau}
\|u\|_* \le M,
\end{align}
for $u \in \mathcal{A}_{M,\gamma_0}(U)$. 

Let $U_1,U_2$ be relatively compact open subsets in $U$ such that $K\subset U_1 \Subset U_2$. Let  $\rho$ be a smooth cut-off function on $U$ such that $0 \le \rho \le 1$ and $\rho= 1$ on $U_1$ and $\supp \rho \Subset U_2$.

Let $x,y \in K$. In what follows we use $\lesssim$ or $\gtrsim$ to denotes $\le$ or $\ge$ modulo a positive multiplicative constant independent of $u,x,y,\epsilon$ (below).  Let $g_x(z):= \log |z-x|$ for $z \in U$. We have $\ddc g_x = \delta_x$ the Dirac mass at $x$. Since the problem is local we can assume indeed that $g_x < -10$ for every $x \in K$ (by shrinking $U$ if necessary).

For every constant $\epsilon>0$, let $g_{x, \epsilon}(z):= \log (|z-x|+ \epsilon)$ which decreases to $g_x(z)$ as $\epsilon \to 0$. Put $h_\epsilon:= g_{x, \epsilon}- g_{y,\epsilon}$.  It follows that 
$$u(x)- u(y)= \int_U \rho u \ddc g_x - \int_U \rho u \ddc g_y = \lim_{\epsilon \to 0} \int_U \rho u \ddc h_\epsilon.$$
Let $p(t):= (-t)\big(\log (-t)\big)^{1+\gamma}$ for $t<-10$ and for  some constant $\gamma$ to be determined later. Let 
$$\tilde{p}(z):= p(g_x(z)+ g_y(z)) >0,$$
for $z \in U$. 
By integration by parts, one has 
\begin{align*}
I &:= \int_U \rho u \ddc h_\epsilon = -\int_U \rho d u \wedge \dc h_\epsilon - \int_U u d \rho \wedge \dc h_\epsilon.
\end{align*}

Since $\|h_{\epsilon}\|_{C^1(U_2 \backslash U_1)} \lesssim |x-y|$ and $d \rho$ vanishes on $U_1$, using Cauchy-Schwarz inequality, we infer
$$|I| \lesssim |x-y|\|u\|_{L^1(U_2)}+ \bigg(\int_U \rho \tilde{p} du \wedge \dc u\bigg)^{1/2} \bigg(\int_U \rho \tilde{p}^{-1}d h_\epsilon \wedge \dc h_\epsilon \bigg)^{1/2}.$$
Let 
$$I_1:= \int_U \rho \tilde{p} du \wedge \dc u, \quad I_2:= \int_U \rho \tilde{p}^{-1}d h_\epsilon \wedge \dc h_\epsilon.$$ 

Now we want to estimate $I_2$. In order to do so, we just split $\supp \rho$ into three regions as in \cite{GGZ-logcontiu}. Without loss of generality, we assume that $x=0$. It suffices to consider $y$ close to $x$. Let $\delta:= |x-y| < e^{-100}$.   Let $A_1:= \{|z| \ge 1\}$. Let $A_2:= \{|z| \le 2 \delta \}$ and $A_3:= \{ 2 \delta \le |z| \le 1\}$. One has
$$\tilde{p}^{-1}(z)d h_\epsilon \wedge \dc h_\epsilon(z) \lesssim |x-y|$$
on $A_1$. Hence 
\begin{align}\label{ine-danhgiaI21}
\int_{A_1} \rho \tilde{p}^{-1} d h_\epsilon \wedge \dc h_\epsilon \lesssim |x-y|.
\end{align}
On the other hand
\begin{align}\label{ine-danhgiaI22}
\int_{A_2} \tilde{p}^{-1}(z)d h_\epsilon \wedge \dc h_\epsilon(z) &\lesssim \int_{A_2} \frac{\omega_{\C}}{|z||y-z|\big(-\log |z|- \log|y-z|\big) \log^{1+\gamma} \big|\log |z|+ \log|y-z| \big|}\\
\nonumber
& \lesssim \int_{A_2 \cap \{|z-y| \le |z|\}} \frac{\omega_{\C}}{|y-z|^2 \big|\log|y-z|\big| \log^{1+\gamma} \big|\log|y-z| \big|}\\
\nonumber
& + \int_{A_2 \cap \{|z-y| \ge |z|\}} \frac{\omega_{\C}}{|z|^2 \big|\log|z|\big| \log^{1+\gamma} \big|\log|z| \big|}\\
\nonumber
& \lesssim - \int_{0}^\delta \frac{dr}{r \log r \log^{1+\gamma}(-\log r)}
\end{align}
which is, by the change of variables $t:=  \log (-\log r)$, equal to 
$$\int_{\log (-\log \delta)}^\infty  \frac{dt}{t^{1+\gamma}}= \gamma^{-1} \log^{-\gamma} (-\log |x-y|).$$
Now we consider
$$\int_{A_3} \tilde{p}^{-1}(z)d h_\epsilon \wedge \dc h_\epsilon(z)$$
As in \cite[(3.10)]{GGZ-logcontiu}, direct computations show that 
$$d h_\epsilon \wedge \dc h_\epsilon(z) \lesssim \frac{|x-y|^2}{|x-z|^2|y-z|^2}\bigg(1+ \frac{|x-y|^2}{|z-y|^2}\bigg)^2 \omega_{\C}.$$
It follows that (note $|y-z| \ge |z| - |y|= |z|- \delta \gtrsim  |z|$ on $A_3$)
\begin{align}\label{ine-danhgiaI23}
\int_{A_3} \tilde{p}^{-1}(z)d h_\epsilon \wedge \dc h_\epsilon(z) &\lesssim 
\delta^2 \int_{A_3} \frac{ \omega_{\C}}{|z|^4 \big|\log |z|\big| \log^{1+\gamma}(-\log |z|)}\\
\nonumber
& \lesssim \delta^2 \int_{ 2 \delta  \le r \le 1} \frac{dr}{r^3 \log (-r) \log^{1+\gamma}(-\log r)} 
\end{align}
which is, by the change of variables $t= -\log r$, equal to
$$\delta^2 \int_{- \log 1 \le t \le - \log (2 \delta)} \frac{e^{2t} dt}{t\log^{1+\gamma} t} \lesssim  \frac{1}{ \log^{1+\gamma}(-\log \delta)} \cdot$$
Combining (\ref{ine-danhgiaI23}), (\ref{ine-danhgiaI22}) with (\ref{ine-danhgiaI21}) gives
\begin{align}\label{ine-danhgiaI200}
I_2 \lesssim \frac{1}{ \log^{\gamma}(-\log |x-y|)} \cdot
\end{align}
It remains to bound $I_1$. 
Direct computations give
$$p'(t)= -\log^{1+\gamma}(-t) +  (1+\gamma)(-t)\log^\gamma(-t) \cdot (-t)^{-1} (-1)= - \log^{1+\gamma}(-t) - (1+\gamma) \log^\gamma(-t) <0,$$
and 
\begin{align*}
p''(t) &=- (1+\gamma) \log^\gamma(-t) (-t)^{-1} (-1) -(1+\gamma) \gamma \log^{\gamma-1}(-t) \cdot (-t)^{-1} (-1)\\
&= \frac{(1+\gamma)\log^\gamma(-t)}{-t}+ \frac{\gamma(1+\gamma)}{(-t) \log^{1-\gamma}(-t)} \cdot
\end{align*}
Hence $p$ is convex decreasing function on $\{t< -10\}$. It follows that $$p(g_x)+ p(g_y) \ge 2 p(\frac{g_x+ g_y}{2}) \gtrsim p(g_x+ g_y)= \tilde{p}.$$
Thus
$$I_1 \le \int_U \rho \tilde{p} \ddc \psi \le \int_U \rho p(g_x) \ddc \psi+ \int_U \rho p(g_y) \ddc \psi.$$
We estimate each term in the right-hand side of the last inequality. Since they are similar, it suffices to treat $ I'_1:=\int_U \rho p(g_x) \ddc \psi$. One has (recall that $\psi$ is smooth)
$$I'_1 =\int_U \rho p(g_x) \ddc (\psi- \psi(x))=  \int_U \rho (\psi- \psi(x)) \ddc (p \circ g_x)+ I_3,$$
where $I_3$ is a sum of integrals whose integrands are forms containing derivatives of $\rho$. It follows that these integrands are zeros on $U_1$. This together with the fact that $p \circ g_x$ is smooth outside $K$ implies that $I_3$ is bounded by $\lesssim \|\psi\|_{L^\infty} \lesssim M$ (uniformly in $\epsilon, u$). Let $\tilde{g}_{x,k}:= \max\{ g_x, -k\}$ for $k \in \N$. We have that $\tilde{g}_{x,k}$ decreases to $\tilde{g}_x$.   Observe 
\begin{align*}
\ddc (p \circ g_x) &= \lim_{k \to \infty} \ddc (p \circ \tilde{g}_{x,k})\\
& = \lim_{k \to \infty} \bigg(  p'(\tilde{g}_{x,k})\ddc\tilde{g}_{x,k} + p''(g_x)d \tilde{g}_{x,k} \wedge \dc \tilde{g}_{x,k} \bigg).
\end{align*}
Hence we see that (remember $x=0$)
\begin{align*}
\int_U \rho (\psi- \psi(x)) \ddc (p \circ g_x) & \lesssim \int_U \rho |\psi(z)- \psi(0)| \frac{\log^\gamma(- \log |z|)}{- \log |z|} \cdot \frac{\omega_\C}{|z|^2}\\
& \lesssim \int_U \frac{\rho  \omega_\C}{\log^{1+\gamma_0-\gamma}(-\log |z|)) (- \log |z|)|z|^2} \\
\lesssim \int_0^{C''} \frac{dr}{r |\log r| \log^{1+ \gamma_0 - \gamma}(-\log r)}<\infty
\end{align*}
provided that $\gamma< \gamma_0$. We infer that 
$$I_1 \lesssim 1$$
if $\gamma < \gamma_0$. This coupled with (\ref{ine-danhgiaI200}) yields that 
$$I \lesssim \frac{1}{ \max\big\{\log^{\gamma/2}\big|\log |x-y|\big|,1 \big\}}$$
for $\gamma< \gamma_0$. The desired assertion hence follows when $\psi$ is $\log^{1+\gamma_0} \log$-continuous. 

It remains to treat the case where $\psi$ is H\"older continuous. We argue similarly. We will choose a different function $p$. As above we obtain $I_1,I_2$ which are to be estimated.  Let $\gamma \in (0, \gamma_1)$ and  $p(t):= e^{-\gamma t}$ for $t <-10$ and $\tilde{p}:=p(g_x+ g_y)$. We have $p'(t)= - \gamma e^{-\gamma t}<0$ and $p''(t)= \gamma^2 e^{-\gamma t}$. As in the first part of the proof, one gets
$$
\int_{A_1} \rho \tilde{p}^{-1}\d h_\epsilon \wedge \dc h_\epsilon \lesssim |x-y|,
$$
and
$$
\int_{A_2} \tilde{p}^{-1}(z)d h_\epsilon \wedge \dc h_\epsilon(z) \lesssim  \int_{0}^\delta \frac{dr}{r^{1-\delta}}= \delta^\gamma/\gamma.
$$
and
$$
\int_{A_3} \tilde{p}^{-1}(z)d h_\epsilon \wedge \dc h_\epsilon(z) \lesssim 
\delta^2 \int_{A_3} \frac{ \omega_{\C}}{|z|^{4-\delta}}\lesssim 
 \delta^2 \int_{ 2 \delta  \le r \le 1} \frac{dr}{r^{3-\delta}} \lesssim \delta^\gamma. 
$$
It follows that $I_2 \lesssim |x-y|^\gamma$. We estimate $I_1$ similarly as before to obtain that $I_1 \lesssim  \int_0^1 r^{-1-\gamma+ \gamma_1} dr <\infty$ because $\gamma < \gamma_1$. Hence we get 
$$|I| \lesssim |x-y|^{\gamma/2}$$
and the desired assertion for H\"older continuity follows.   
\end{proof}

\begin{proof}[Proof of Corollary \ref{cor-family}]
Observe that $\pi^{-1}(K)$ is compact. Since $\pi$ is a submersion, we can cover this set by a finitely many local charts $\mathcal{U}$ in $\mathcal{X}$ such that $\mathcal{U}= Y_1 \times U$, where $Y_1$ is a local chart in $Y$ and $U$ is an open subset in $\C^{m}$, where $m$ is the dimension of fibers of $\pi$. We can indeed assume that $\overline U$ is contained in a bigger local chart of similar forms. 

We only prove the case where $\psi$ is $\log^{1+\gamma_0} \log$-continuous. The H\"older case is done similarly. 
Fix $y \in Y_1$ and let $\psi_y:= \psi|_{\{y\} \times U}$ and $\omega_y:= \omega|_{\{y\} \times U}$. We have  $T_y:= T|_{\{y\} \times U}= \omega_y + \ddc \psi_y$. Since $\omega_y$ is smooth, there exists a constant $C>0$ independent of $y$ such that $\omega_y \le C \omega_{\C^m}$, and $\|\psi_y\|_{\log^{1+\gamma_0} \log(U)} \le C$. Let $T'_y:= C \omega_{\C^n}+ \ddc \psi_y$ which is a closed positive current dominating $T_y$.  Fix $x_0 \in \{y\} \times U$ and let $u(x):= d_{T,y}(x_0,x)$ for $x \in U \backslash V$. As explained in the paragraph before Corollary \ref{cor-family}, we know that $u \in W^*(U)$ and $du \wedge \dc u \le T'_y$. This combined with Theorem \ref{th-bichanLinfinity} yields the desired assertion. 
\end{proof}

In the statement of Theorem \ref{th-bichanLinfinity}, if $\psi$ is merely bounded, then $u$ is not necessarily bounded. We thank Gabriel Vigny for pointing out the following example. 

\begin{example}  Let $u(z):= -\log\big(-\log (-\log |z|^2) \big)$ on the disk $U:=\D_{1/10}$ of radius $1/10$ centered at $0$ in $\C$. We compute
$$\mu:= i \partial u \wedge \bar \partial u = \frac{\omega_\C}{|z|^2 \log^2  |z|^2 \log^2 (-\log |z|^2) } \cdot$$
Since $\mu$ is of finite mass on $U$, we see that $u \in W^*(U)= W^{1,2}(U)$ and $u$ is unbounded.
Recall that 
$$\psi(z):= \int_U \log |z- w| d\mu(w)$$
is a potential of $\mu$, i.e, $\ddc \psi =\mu$. Observe that $\psi$ is smooth outside $0$. We check that $\psi$ is bounded on $U$. Compute
$$\psi(0)= \int_U \log |z|\frac{\omega_\C}{|z|^2 \log^2 |z|^2 \log^2 (-\log |z|^2) }$$
which is
$$\lesssim \int_0^{1/10} \frac{dr}{r \log r^2 \log^2(-\log r^2)}= \int_{-\infty}^{-\log 10} \frac{d t}{ 2t \log^2 (-2t)}< \infty$$
by the change of varibles $t= \log r$.  Hence $\psi$ is bounded on $U$.
\end{example}

\bibliography{biblio_family_MA,biblio_Viet_papers,bib-kahlerRicci-flow}

\begin{thebibliography}{10}

\bibitem{DLW}
{\sc T.-C. Dinh, L.~Kaufmann, and H.~Wu}, {\em Dynamics of holomorphic
  correspondences on {R}iemann surfaces}, Internat. J. Math., 31 (2020),
  pp.~2050036, 21.

\bibitem{DLW2}
\leavevmode\vrule height 2pt depth -1.6pt width 23pt, {\em Random walks on
  {${\rm SL}_2(\Bbb C)$}: spectral gap and limit theorems}, Probab. Theory
  Related Fields, 186 (2023), pp.~877--955.

\bibitem{DKC_Holder-Sobolev}
{\sc T.-C. Dinh, S.~a. Ko{\l}odziej, and N.~C. Nguyen}, {\em The complex
  {S}obolev space and {H}\"{o}lder continuous solutions to {M}onge-{A}mp\`ere
  equations}, Bull. Lond. Math. Soc., 54 (2022), pp.~772--790.

\bibitem{DinhMarinescuVu}
{\sc T.-C. Dinh, G.~Marinescu, and D.-V. Vu}, {\em Moser-{T}rudinger
  inequalities and complex {M}onge-{A}mp\`ere equation}, Ann. Sc. Norm. Super.
  Pisa Cl. Sci. (5), 24 (2023), pp.~927--954.

\bibitem{DS_decay}
{\sc T.-C. Dinh and N.~Sibony}, {\em Decay of correlations and the central
  limit theorem for meromorphic maps}, Comm. Pure Appl. Math., 59 (2006),
  pp.~754--768.

\bibitem{GGZ-logcontiu}
{\sc V.~Guedj, H.~Guenancia, and A.~Zeriahi}, {\em Diameter of {K}\"ahler
  currents}.
\newblock \url{arXiv:2310.20482}, 2023.

\bibitem{Guo-Phong-Song-Sturm}
{\sc B.~G. Guo, D.~H. Phong, J.~Song, and J.~Sturm}, {\em Diameter estimates in
  {K}\"ahler geometry}.
\newblock Comm. Pure Appl. Math., 2022.
\newblock https://doi.org/10.1002/cpa.22196.

\bibitem{Guo-Phong-Song-Sturm2}
\leavevmode\vrule height 2pt depth -1.6pt width 23pt, {\em Sobolev inequalities
  on {K}\"ahler spaces}.
\newblock \url{arXiv:2311.00221}, 2023.

\bibitem{Guo-Phong-Tong-Wang}
{\sc B.~G. Guo, D.~H. Phong, F.~Tong, and C.~Wang}, {\em On the modulus of
  continuity of solutions to complex {M}onge-{A}mp\`ere equations}.
\newblock \url{arXiv:2112.02354}, 2021.

\bibitem{YangLi}
{\sc Y.~Li}, {\em On collapsing {C}alabi-{Y}au fibrations}, J. Differential
  Geom., 117 (2021), pp.~451--483.

\bibitem{Vigny}
{\sc G.~Vigny}, {\em Dirichlet-like space and capacity in complex analysis in
  several variables}, J. Funct. Anal., 252 (2007), pp.~247--277.

\bibitem{Vigny_expo-decay-birational}
{\sc G.~Vigny}, {\em Exponential decay of correlations for generic regular
  birational maps of {$\Bbb{P}^k$}}, Math. Ann., 362 (2015), pp.~1033--1054.

\bibitem{Vigny-Vu-Lebesgue}
{\sc G.~Vigny and D.-V. Vu}, {\em Lebesgue points of functions in the complex
  {S}obolev space}.
\newblock International Journal of Mathematics, 2023.
\newblock https://doi.org/10.1142/S0129167X24500149.

\bibitem{Vu_nonkahler_topo_degree}
{\sc D.-V. Vu}, {\em Equilibrium measures of meromorphic self-maps on
  non-{K}\"{a}hler manifolds}, Trans. Amer. Math. Soc., 373 (2020),
  pp.~2229--2250.

\end{thebibliography}
\bibliographystyle{siam}

\bigskip

\noindent
\Addresses
\end{document}